\documentclass[12pt,reqno, oneside]{amsart}
\usepackage{amssymb} 
\usepackage{pdfsync}  
\usepackage[text={6.3in,8.5in},centering]{geometry}
\usepackage{hyperref}
\usepackage{graphicx}
\usepackage{mathrsfs}
\usepackage{epigraph}

\newtheorem{thm}{Theorem}[section]

\newtheorem{lem}[thm]{Lemma}

\newcommand{\ve}{^{\bullet\raise 1.6pt\hbox{\vrule width 4pt height .8pt}}}
\newcommand{\edge}{^{\raise 1.6pt\hbox{\kern .3pt{\vrule width 5pt height .8pt}}}}
\newcommand{\vertx}{^{\bullet}}

\newcommand{\Tve}{t{^{\bullet\raise 1.6pt\hbox{\vrule width 4pt height .8pt}}}}
\newcommand{\Tedge}{t^{\raise 1.6pt\hbox{\kern .3pt{\vrule width 5pt height .8pt}}}}
\newcommand{\Tvertx}{t^{\bullet}}
\newcommand{\seqnum}[1]{\href{http://oeis.org/#1}{{#1}}}

%

\newcommand{\A}{\mathscr{A}}
\newcommand{\T}{\mathscr{T}}
\newcommand{\hi}{homeomorphically irreducible}

\usepackage{tikz}
\tikzstyle{vertex}=[circle, draw, inner sep=0pt, minimum size=8pt]
\newcommand{\vertex}{\node[vertex]}

\begin{document}
\title[Counting Homeomorphically Irreducible Trees]{Good Will Hunting's Problem:\\
Counting Homeomorphically Irreducible Trees}

\author{Ira M. Gessel$^*$}
\address{Department of Mathematics\\
   Brandeis University\\
   Waltham, MA 02453}
\email{gessel@brandeis.edu}

\date{\today\\ \null\kern 10pt$^*$Supported by a grant from the Simons Foundation (\#427060, Ira Gessel).}

\begin{abstract}
In the film \emph{Good Will Hunting},
the main character, a janitor at MIT named Will Hunting, attacks the problem of drawing all the homeomorphically irreducible trees with 10 vertices. Although the film suggests that this is a difficult problem, it is in fact quite easy. A much more interesting problem is counting homeomorphically irreducible trees with $n$ vertices for all $n$, a feat accomplished by Harary and Prins  in 1959. Here we give an exposition and simplification of Harary and Prins's result, introducing some of the fundamental ideas of graphical enumeration.
\end{abstract}
\subjclass{05C30, 05A15}

\maketitle

\setlength{\epigraphwidth}{.65\textwidth}
\epigraph{My colleagues and I have conferred, and there is a problem on the 
board right now that took us more than two years to prove. Let this be said: the gauntlet has been thrown down. But the faculty have answered, and answered with vigor.}{Professor Lambeau in the film \emph{Good Will Hunting}}

\section{Introduction.}
In the 1997 film \emph{Good Will Hunting}, the main character, a young janitor at MIT named Will Hunting, astounds MIT Professor Lambeau by tackling the problem ``Draw all the homeomorphically irreducible trees with $n=10$." In other words, draw all trees on 10 vertices with no vertices of degree 2, where isomorphic trees are considered the same. 

Like the other problems in the film, this one is at about the level of an undergraduate homework problem. Since there are only ten such trees (Will draws eight of them before he is interrupted), to solve the problem it is enough to find a systematic way of making sure that no trees have been omitted or repeated, and it's not hard to do this by considering the possible degrees of the vertices, breaking up the problem into small cases that are easily dealt with individually. (See, for example, Horv\'ath,  Kor\'andi and  Szab\'o \cite{hks}.)

A much more interesting problem is that of counting homeomorphically irreducible trees with $n$ vertices for \emph{all} $n$. Since we can't compute infinitely many numbers,  this requires some explanation.  
Ideally we would like a simple explicit formula, such as Cayley's formula $n^{n-2}$ for the number of \emph{labeled} trees on $n$ vertices. But there is unfortunately no such simple formula for counting unlabeled trees.
As an alternative, we might be satisfied with an efficient algorithm  for computing the numbers. 
(See Wilf \cite{wilf} for a discussion of what an answer is for a general enumeration problem; see also Pak \cite{pak}.)
But we can do better than this: we can  find a functional equation for the generating function for homeomorphically irreducible trees that enables us to count them quite efficiently.

We will first count unlabeled trees with no restrictions on the degrees of the vertices. The enumeration of these trees is one of the fundamental results of graphical enumeration, and our account is an introduction to the topic.  Once we understand how to count these trees, the modification needed for counting homeomorphically irreducible trees is easy. 

Unrestricted trees were first counted by Cayley \cite{cayley1, cayley2} and a simpler approach was given by Otter \cite{otter}.
We shall follow Otter's approach, with some variations and simplifications.
We count trees in two steps. First we count \emph{rooted} trees, using a  recursive decomposition that yields a functional equation for their generating function.  Rather than using P\'olya's theorem, as in some approaches, we use simple facts about enumeration of multisets to derive the functional equation. Then we reduce the enumeration of (unrooted) trees to that of rooted trees. To do this, Otter  introduced the method of  ``dissimilarity characteristic theorems,'' which relate the numbers of orbits of vertices and edges under the automorphism group of a tree. We use instead a ``dissymmetry theorem,'' introduced by Leroux  \cite{leroux} (see also Leroux and Miloudi \cite{l-m}), which is conceptually simpler and generalizes more easily. (See, e.g., Gainer-Dewar and Gessel \cite{gg}.)

Homeomorphically irreducible trees were first counted by Harary and Prins in 1959 \cite{hp}. An exposition of their work can also be found in the book of Harary and Palmer \cite[pp.~61--64]{graph-enum}, and another enumeration of homeomorphically irreducible trees, using the theory of combinatorial species, was given by Leroux and Miloudi \cite[Exemple 1]{l-m}.

\section{Graphs and trees}
\label{s-basics}

A \emph{graph} consists of a set of vertices, some pairs of which are joined by edges. Formally, a graph $G$ is an ordered pair $(V,E)$ where $V$ is a set, whose elements are called the \emph{vertices} of $G$, and $E$ is a set of $2$-element subsets of $V$, whose elements are called the \emph{edges} of $G$. 

Figure \ref{f-graph1}  
is a drawing of the graph 
$(\{1,2,3,4,5\}, \{\{1,2\},\{2,3\}, \{3,4\}, \{1,4\}, \{3,5\} \})$.

\begin {figure}[hbt]
\begin{tikzpicture}
	\vertex[fill] (1) at (0,0) [label=above:$1$] {};
	\vertex[fill] (2) at (1,1) [label=above:$2$] {};
	\vertex[fill] (3) at (2,0) [label=above:$3$] {};
	\vertex[fill] (4) at (1,-1) [label=above:$4$] {};
	\vertex[fill] (5) at (3.5,0) [label=above:$5$] {};
	
	\path
		(1) edge (2)
		(2) edge (3)
		(3) edge (4)
		(4) edge (1)
		(3) edge (5)
	;
\end{tikzpicture}
   \caption{A graph}
   \label{f-graph1}
\end{figure}

A \emph{walk} in a graph $G$ from vertex $v_1$ to vertex $v_k$ is a sequence $(v_1, v_2, \dots, v_k)$ of vertices of $G$ such that each $\{v_i, v_{i+1}\}$ is an edge of $G$. A walk is a \emph{path} if all its vertices are distinct. A walk $(v_1, v_2, \dots, v_k)$ is a \emph{cycle} if $k\ge3$, $v_1=v_k$, and $v_1, v_2,\dots, v_{k-1}$ are distinct.
For example, in the graph of Figure \ref{f-graph1}, $(1,2,1)$ is a walk that is neither a path nor a cycle, $(1,2,3,5)$ is a path, and $(1,2,3,4,1)$ is a cycle.

A graph $G$ is \emph{connected} if for any two vertices $u$ and $v$ in $G$, there is a path from $u$ to $v$.
A \emph{tree} is a connected graph with no cycles.

An important property of trees, which we will use in the proof of Theorem \ref{t-diss},  is that for any two vertices $u$ and $v$ in a tree, there is a unique path from $u$ to $v$.

The graph of Figure \ref{f-graph1} is connected, but is not a tree. Figure \ref{f-2trees} shows two trees.

The \emph{degree} of a vertex in a graph is the number of edges incident with the vertex. A graph is called \emph{\hi} (or \emph{series-reduced}) if it has no vertices of degree 2.

The term ``\hi" comes from the old-fashioned view of a graph as a topological space---a vertex of degree 2 together with its incident edges can be replaced with a single edge yielding a smaller ``homeomorphic" graph, so a graph that cannot be reduced in this way is \hi. See Figure \ref{f-2trees} for an example of this reduction.

\begin{figure}[hbt]
\centering
\begin{tikzpicture}
	\vertex[fill] (1) at (0,1) [label=above:$1$] {};
	\vertex[fill] (2) at (0,-1) [label=above:$2$] {};
	\vertex[fill] (3) at (1,0) [label=above:$3$] {};
	\vertex[fill] (4) at (2,0) [label=above:$4$] {};
	\vertex[fill] (5) at (3,0) [label=above:$5$] {};
	\vertex[fill] (6) at (4,1) [label=above:$6$] {};
	\vertex[fill] (7) at (4,-1) [label=above:$7$] {};

	\path
		(1) edge (3)
		(2) edge (3)
		(3) edge (4)
		(4) edge (5)
		(5) edge (6)
		(5) edge (7)
	;
\begin{scope}[shift={(2.3in,0pt)}] end 
	\vertex[fill] (1) at (0,1) [label=above:$1$] {};
	\vertex[fill] (2) at (0,-1) [label=above:$2$] {};
	\vertex[fill] (3) at (1,0) [label=above:$3$] {};
	\vertex[fill] (5) at (3,0) [label=above:$5$] {};
	\vertex[fill] (6) at (4,1) [label=above:$6$] {};
	\vertex[fill] (7) at (4,-1) [label=above:$7$] {};

	\path
		(1) edge (3)
		(2) edge (3)
		(3) edge (5)
		(5) edge (6)
		(5) edge (7)
	;
\end{scope}
\end{tikzpicture}
\caption{A homeomorphically reducible tree and its reduction}
\label{f-2trees}
\end{figure}
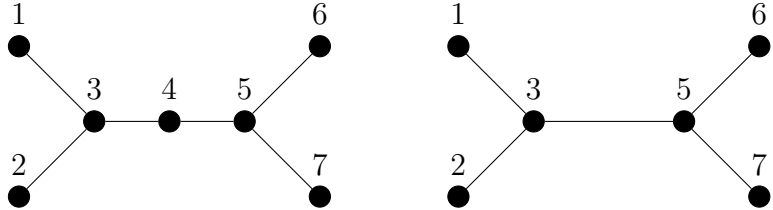

In counting graphs, we want to consider graphs which differ only in their labeling to be equivalent; in other words we want to count \emph{unlabeled graphs}. 
Informally, an unlabeled graph is a graph without labels on the vertices. To give a formal definition, we need to define isomorphism of graphs.

An \emph{isomorphism} from a graph $G_1=(V_1,E_1)$ to a graph $G_2=(V_2,E_2)$ is a bijection $\phi$ from $V_1$ to $V_2$ with the property that for vertices $u$ and $v$ in $V_1$,  $\{u,v\}\in E_1$ if and only if $\{\phi(u),\phi(v)\}\in E_2$. An isomorphism from a graph to itself is called an \emph{automorphism}.

If there is an isomorphism from $G_1$ to $G_2$ then we call $G_1$ and $G_2$ \emph{isomorphic}. Isomorphism is an equivalence relation, and the equivalence classes are called \emph{isomorphism classes}. Unlabeled graphs are formally just isomorphism classes of graphs, though we think of them intuitively as pictures like Figure \ref{f-unlabeled}, which represents the isomorphism class of the graph of Figure \ref{f-graph1}.
\begin {figure}[hbt]
\begin{tikzpicture}
	\vertex[fill] (1) at (0,0)  {};
	\vertex[fill] (2) at (1,1)  {};
	\vertex[fill] (3) at (2,0)  {};
	\vertex[fill] (4) at (1,-1)  {};
	\vertex[fill] (5) at (3.5,0) {};
	
	\path
		(1) edge (2)
		(2) edge (3)
		(3) edge (4)
		(4) edge (1)
		(3) edge (5)
	;
\end{tikzpicture}
   \caption{An unlabeled graph}
   \label{f-unlabeled}
\end{figure}
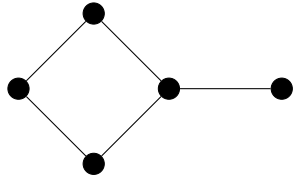

The only enumeration problems that we are concerned with here are of unlabeled trees, so we will usually omit the word ``unlabeled" when referring to enumeration. 

\subsection{Rooted trees.}
\label{s-rooted}
In order to count trees, we will need to count several types of rooted trees.

A \emph{vertex-rooted tree} is an ordered pair $(T,v)$ where $T$ is a tree and $v$ is a vertex of~$T$.
We think of a vertex-rooted tree as a tree with one vertex ``marked" as the root, as in the left half of Figure \ref{f-rtree-dec}.
An isomorphism from a vertex-rooted tree $(T_1, v_1)$ to a vertex-rooted tree $(T_2,v_2)$ is an isomorphism from $T_1$ to $T_2$ that takes $v_1$ to $v_2$. Unlabeled vertex-rooted trees may be defined as isomorphism classes of vertex-rooted trees. In Section \ref{s-unrooted} we will  discuss trees rooted at an edge or rooted at a vertex and incident edge, but for now we will refer to vertex-rooted trees simply as ``rooted trees."

There is a simple decomposition of rooted trees that allows them to be counted. Let $T$ be a rooted tree. If we remove the  edges incident with the root from $T$ then what remains is the root together with a set of trees.
Let us root each of these trees at the vertex that was adjacent to the root of $T$. Then $T$ can be recovered from its root and the set of rooted subtrees.
For example, Figure \ref{f-rtree-dec} shows a rooted tree and its decomposition.

\begin {figure}[hbt]
\begin{tikzpicture}
	\vertex[fill] (4) at (1,2) [label={[label distance=3pt] above:$4$}] {};
	\vertex[fill] (1) at (0,1) [label=left:$1$] {};
	\vertex[fill] (3) at (1,1) [label= right:$3$] {};
	\vertex[fill] (6) at (2,1) [label=right:$6$] {};
	\vertex[fill] (5) at (-.5,0) [label=left:$5$] {};
	\vertex[fill] (7) at (.5,0) [label=right:$7$] {};
	\vertex[fill] (2) at (2,0) [label= right:$2$] {};
	\draw [thin]  (4) circle [radius=0.22];
	\path
		(4) edge (1)
		(4) edge (3)
		(4) edge (6)
		(1) edge (5)
		(1) edge (7)
		(6) edge (2)
	;
\begin{scope}[shift={(2.3in,0pt)}] end 
	\vertex[fill] (4) at (1,2) [label={[label distance=3pt] above:$4$}] {};
	\vertex[fill] (1) at (0,1) [label=left:$1$] {};
	\vertex[fill] (3) at (1,1) [label= right:$3$] {};
	\vertex[fill] (6) at (2,1) [label=right:$6$] {};
	\vertex[fill] (5) at (-.5,0) [label=left:$5$] {};
	\vertex[fill] (7) at (.5,0) [label=right:$7$] {};
	\vertex[fill] (2) at (2,0) [label= right:$2$] {};
	\draw [thin]  (4) circle [radius=0.22];
	\draw [thin]  (4) circle [radius=0.27];
	\path
		(1) edge (5)
		(1) edge (7)
		(6) edge (2)
		;
	\draw [thin]  (1) circle [radius=0.22];
	\draw [thin]  (3) circle [radius=0.22];
	\draw [thin]  (6) circle [radius=0.22];
\end{scope}
\end{tikzpicture}
   \caption{A rooted tree and its decomposition}
   \label{f-rtree-dec}
\end{figure}
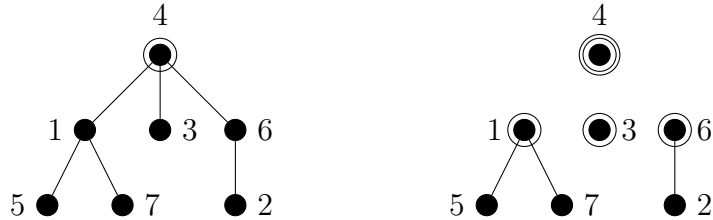

When we apply this decomposition to unlabeled trees, the set of rooted subtrees becomes a \emph{multiset} of unlabeled rooted trees, since these unlabeled rooted trees need not be distinct.  The reader may find it instructive to give a rigorous proof, using the definition of unlabeled trees in terms of isomorphisms, of the following intuitively obvious fact: There is a bijection from unlabeled rooted trees to pairs consisting of a single unlabeled vertex and a multiset of unlabeled rooted trees,  and this bijection preserves the total number of vertices.

\section{Weights, generating functions, and multisets.}
There is no simple formula for counting unlabeled trees.%
\footnote{There is a complicated explicit formula, as a sum over partitions, for counting unlabeled rooted trees due to Gilbert Labelle \cite[Corollary A2]{labelle}; see also Wagner \cite{wagner} and Constantineau and J. Labelle \cite{cl}. 
}
However, there are reasonably simple formulas for  \emph{generating functions} for counting unlabeled trees from which the numbers can be computed efficiently. The generating function 
for a sequence $u_0, u_1, u_2,\dots$ is  defined to be the infinite series 
$u_0+u_1x+u_2x^2+\cdots$, considered either as an analytic function or as a formal power series. (See Niven \cite{niven} for an exposition of the basic facts of formal power series.) We shall take a slightly different approach to generating functions: We have a set (often infinite) $\A$ of objects to be counted. Each element $a$ of $\A$ has a \emph{size} $s(a)$ associated to it. We then assign the \emph{weight} $x^{s(a)}$ to $a$, 
where $x$ is an indeterminate.\footnote{In other applications of generating functions more general weights may be used.} 
We assume that there are only finitely many elements of $\A$ of size $n$ for each $n$.
Then we define the \emph{generating function} $A(x)$ for $\A$ to be the  sum, as a formal power series, of the weights of all elements of $\A$:  $A(x)=\sum_{a\in \A} x^{s(a)}$.  Thus the coefficient of $x^n$ in $A(x)$ is the number of elements of $\A$ of size $n$.

To apply the decomposition of a rooted tree  into a root and a multiset of rooted trees,  we need to compute generating functions for multisets. Given a set $\A$ with a size function $s$, we
define the size of a multiset $\{a_1,a_2,\dots, a_k\}$ of elements of $\A$ to be 
$s(a_1)+s(a_2)+\cdots+s(a_k)$. Then the weight of the multiset $\{a_1,a_2,\dots, a_k\}$ is 
$x^{s(a_1)+s(a_2)+\cdots+s(a_k)}$, the product of the weights of its elements.
 We would like to find a formula for the generating function  for the set $M(\A)$ of all multisets of elements of $\A$ in terms of the generating function $A(x)$ for $\A$.
 
Let $h_n[A(x)]$ denote%
\footnote{This notation is taken from the theory of symmetric functions; see, e.g. \cite[p.~447, Definition A2.6]{EC2}.}
the generating function for multisets of $n$ elements chosen from $\A$, and let $h[A(x)] = \sum_{n=0}^\infty h_n[A(x)]$ be the generating function for all multisets of elements of $\A$. We will need formulas for $h_2[A(x)]$ and $h[A(x)]$. 

\begin{lem}
\label{l-h2}
The generating function for $2$-element multisets of elements of $\A$ is 
\begin{equation*}
h_2[A(x)]=\frac 12 \left(A(x)^2 + A(x^2)\right).
\end{equation*}
\end{lem}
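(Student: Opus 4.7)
The plan is to compare the generating function $A(x)^2$ for ordered pairs with the desired generating function $h_2[A(x)]$ for $2$-element multisets, and extract the formula by separating equal pairs from unequal pairs.

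First I would expand $A(x)^2$ as a double sum. By the definition of $A(x)$ and the distributive law for formal power series,
\begin{equation*}
A(x)^2 = \sum_{a \in \A}\sum_{b \in \A} x^{s(a)+s(b)},
\end{equation*}
so the coefficient of $x^n$ in $A(x)^2$ counts ordered pairs $(a,b) \in \A \times \A$ with $s(a)+s(b)=n$. I would then split this sum into the diagonal contribution (where $a=b$) and the off-diagonal contribution (where $a \ne b$). The diagonal contributes $\sum_{a \in \A} x^{2s(a)} = A(x^2)$, and the off-diagonal contribution counts ordered pairs of distinct elements.

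Next I would classify $2$-element multisets of elements of $\A$ into two types: those of the form $\{a,a\}$ and those of the form $\{a,b\}$ with $a \ne b$. The first type contributes $\sum_{a \in \A} x^{2s(a)} = A(x^2)$ to $h_2[A(x)]$. The second type is in $2$-to-$1$ correspondence with ordered pairs $(a,b)$ of distinct elements, because each such multiset $\{a,b\}$ gives rise to exactly the two ordered pairs $(a,b)$ and $(b,a)$, which have the same weight $x^{s(a)+s(b)}$. Hence the generating function for multisets of distinct pairs is half the off-diagonal part of $A(x)^2$, namely $\tfrac12(A(x)^2 - A(x^2))$.

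Adding the two contributions yields
\begin{equation*}
h_2[A(x)] = A(x^2) + \tfrac12\bigl(A(x)^2 - A(x^2)\bigr) = \tfrac12\bigl(A(x)^2 + A(x^2)\bigr),
\end{equation*}
as claimed. The only subtle point is the bookkeeping: one must take care that the ``diagonal'' multisets $\{a,a\}$ correspond to a single ordered pair $(a,a)$, not two, so they get counted once rather than being halved; this is exactly why $A(x^2)$ appears with a plus sign rather than being absorbed into the halving. I expect no serious obstacle beyond this, since the argument is a standard inclusion–exclusion style split of ordered pairs into equal and unequal cases.
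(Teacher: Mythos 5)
Your proof is correct and follows essentially the same route as the paper: both compare $A(x)^2$ (ordered pairs) with the multiset count, using $A(x^2)$ to account for the diagonal pairs $\{a,a\}$. The paper phrases it as ``$A(x)^2+A(x^2)$ counts every $2$-element multiset twice,'' while you separate the diagonal and off-diagonal contributions explicitly, but the underlying argument is identical.
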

\begin{proof}
First, $A(x)^2$ counts ordered pairs of elements of $\A$. So it counts each unordered pair of distinct elements twice and each unordered pair of the same element once. 
Next, $A(x^2)$ counts unordered pairs of the same element.

Therefore $A(x)^2 + A(x^2)$ counts every 2-element multiset of elements of $\A$ twice.
\end{proof}

\begin{lem}
\label{l-h}
The generating function for all multisets of elements of $\A$ is 
\begin{equation*}
h[A(x)] = \exp\biggl( \sum_{k=1}^\infty \frac{A(x^k)}{k} \biggr).
\end{equation*}
\end{lem}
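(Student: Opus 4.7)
The plan is to expand $h[A(x)]$ as an infinite product, take logarithms to convert the product into a sum, and then rearrange. Concretely, every multiset of elements of $\A$ is specified by choosing, independently for each $a \in \A$, a nonnegative multiplicity $m_a$; the contribution to the weight is $x^{m_a \cdot s(a)}$. Hence
\begin{equation*}
h[A(x)] = \prod_{a \in \A} \sum_{m=0}^\infty x^{m \cdot s(a)} = \prod_{a \in \A} \frac{1}{1 - x^{s(a)}}.
\end{equation*}
(This product is well-defined as a formal power series since we have assumed only finitely many elements of $\A$ have any given size, so only finitely many factors contribute to each coefficient.)

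Next, I would take the formal logarithm of both sides and apply the standard expansion $-\log(1-y) = \sum_{k=1}^\infty y^k/k$ to each factor:
\begin{equation*}
\log h[A(x)] = \sum_{a \in \A} \sum_{k=1}^\infty \frac{x^{k \cdot s(a)}}{k}.
\end{equation*}
Interchanging the order of summation and pulling $1/k$ out gives
\begin{equation*}
\log h[A(x)] = \sum_{k=1}^\infty \frac{1}{k} \sum_{a \in \A} x^{k \cdot s(a)} = \sum_{k=1}^\infty \frac{A(x^k)}{k},
\end{equation*}
where the last equality uses the observation that substituting $x^k$ for $x$ in the generating function $A(x) = \sum_{a \in \A} x^{s(a)}$ yields exactly $\sum_{a \in \A} x^{k\cdot s(a)}$. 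Exponentiating produces the claimed formula.

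The only real delicacy is the justification that the formal operations (infinite product, logarithm, swap of summations, exponentiation) all make sense in the ring of formal power series. This reduces to checking that each manipulation contributes only finitely many terms to the coefficient of any fixed $x^n$, which follows from the standing assumption that $\A$ has finitely many elements of each size; I would mention this briefly rather than belabor it, since the paper is expository and already points the reader to Niven \cite{niven} for the basics of formal power series.
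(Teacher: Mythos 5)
Your proof is correct and follows essentially the same route as the paper: write $h[A(x)]$ as the product $\prod_{a\in\A} 1/(1-x^{s(a)})$, take logarithms, expand each factor, interchange the order of summation, and exponentiate. Your extra remark on why the formal manipulations are legitimate (finitely many elements of each size) is a welcome addition that the paper leaves implicit.
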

\begin{proof}
If $\A$ has only a single element $a$, then  the generating function for  $M(\A)$ is
\begin{equation*}
1+x^{s(a)}+x^{2s(a)}+x^{3s(a)}+\cdots=\frac{1}{1-x^{s(a)}}.
\end{equation*}

So in the general case, the generating    function for $M(\A)$ is the (possibly infinite) product
\begin{equation}
\label{e-msetgf1}
\prod_{a\,\in \A} \frac{1}{1-x^{s(a)}}.
\end{equation}
The logarithm of \eqref{e-msetgf1} is
\begin{align*}
\log\prod_{a\,\in \A} \frac{1}{1-x^{s(a)}}
  &=\sum_{a\,\in \A}\log\frac{1}{1-x^{s(a)}}\\
  &=\sum_{a\,\in \A}\sum_{k=1}^\infty \frac{x^{ks(a)}}{k}\\
  &=\sum_{k=1}^\infty \frac{1}{k} \sum_{a\,\in \A }(x^k)^{s(a)}\\
  &=\sum_{k=1}^\infty \frac{1}{k}A(x^k),
  \end{align*}
and the desired formula follows by exponentiating.
\end{proof}

As a simple example of Lemmas \ref{l-h2}, we ask how many ways can we obtain a sum of $n$ when we roll two dice, where the dice are considered to be indistinguishable? For example we can obtain a sum of $4$ in two ways, as $3+1$ or $2+2$. Here the set $\A$ is $\{1,2,3,4,5,6\}$, the size of $i\in\A$ is just $i$, and $A(x) = x+x^2+x^3+x^4+x^5+x^6$. Then the generating function for unordered pairs of elements of $\A$ is 
\begin{align*}
\frac 12\bigl( \left(A(x)^2 + A(x^2)\right) &= \frac12 \bigl((x+x^2+x^3+x^4+x^5+x^6)^2 
   + (x^2+x^4+x^6+x^8+x^{10}+x^{12})\bigr)\\
&=  x^{2}+x^{3}+2 x^{4}+2 x^{5}+3 x^{6}+3 x^{7}+3 x^{8}+2 x^{9}+2 x^{10}+x^{11}+x^{12}.
\end{align*}
Similarly, by Lemma \ref{l-h}, the generating function for obtaining a sum of $n$ when rolling any number of dice is 
\begin{align*}
\exp\biggl( \sum_{k=1}^\infty \frac{A(x^k)}{k} \biggr)&=
\exp\biggl(\sum_{k=1}^\infty \frac1k(x^{k}+x^{2k}+x^{3k}+x^{4k}+x^{5k}+x^{6k})\biggr)\\
  &= 1+x +2 x^{2}+3 x^{3}+5 x^{4}+7 x^{5}+11 x^{6}+14 x^{7}+20 x^{8}+26 x^{9}+\cdots\end{align*}

\section{Counting  trees.}
\label{s-rootedgf}

To count trees, we first count rooted trees. We then relate the enumeration of unrooted trees to that of rooted trees.

\subsection{Rooted trees.}
Our decomposition of a rooted tree into a root together with a multiset of rooted trees gives the functional equation for the generating function $r(x)$ for rooted trees, where the size of a rooted tree is the number of vertices:
\begin{thm}
The generating function $r(x)$ for rooted trees satisfies
\begin{equation}
\label{e-R}
r(x) =x h[r(x)]= x \exp\biggl( \sum_{k=1}^\infty \frac{r(x^k)}{k} \biggr).
\end{equation}
\end{thm}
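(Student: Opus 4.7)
The plan is to translate the bijection already highlighted in Section \ref{s-rooted} directly into generating functions, and then apply Lemma \ref{l-h} to rewrite the multiset factor in exponential form. Specifically, the decomposition removes the root and its incident edges, producing the root vertex together with a multiset of rooted subtrees (each subtree rooted at the former neighbor of the root); this is a size-preserving bijection between unlabeled rooted trees on $n \ge 1$ vertices and pairs consisting of one vertex and a multiset of unlabeled rooted trees with total size $n-1$.

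Turning this into generating functions, the root contributes the weight $x$ (a single vertex has size $1$) and the multiset of rooted subtrees contributes, by definition, $h[r(x)]$. Since the bijection is size-preserving, weights multiply, so I obtain
\begin{equation*}
r(x) = x \cdot h[r(x)].
\end{equation*}
Applying Lemma \ref{l-h} with $\A$ equal to the set of unlabeled rooted trees and $A(x)=r(x)$ then gives
\begin{equation*}
r(x) = x \exp\biggl( \sum_{k=1}^\infty \frac{r(x^k)}{k} \biggr),
\end{equation*}
which is the desired identity.

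The only real content is checking that the decomposition really is a bijection on isomorphism classes and that it preserves the vertex count. Preservation of size is immediate (the root contributes one vertex and the subtrees together contain all the remaining vertices). The bijectivity on unlabeled objects is the point the author flags as a worthwhile exercise: one must verify that two rooted trees are isomorphic (as rooted trees) if and only if the resulting multisets of rooted subtrees are equal as multisets of isomorphism classes. This follows because any isomorphism of rooted trees must fix the root and therefore restricts to a bijection between the neighbors of the roots that extends to isomorphisms of the corresponding rooted subtrees, and conversely any family of such subtree isomorphisms glues together with the identification of roots to give an isomorphism of the whole rooted trees. Aside from this verification — which is the main (but modest) obstacle — there is nothing further to prove; the rest is just applying Lemma \ref{l-h}.
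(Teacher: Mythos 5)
Your proof is correct and follows essentially the same route as the paper: it invokes the weight-preserving bijection from Section 2.2 between unlabeled rooted trees and pairs (vertex, multiset of unlabeled rooted trees), and then applies Lemma 3.2 to express the multiset factor as the exponential sum. The only difference is that you also sketch the verification that the decomposition is a bijection on isomorphism classes, which the paper leaves as an exercise for the reader; that addition is sound and harmless.
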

\begin{proof}
The decomposition of Section \ref{s-basics} gives a weight-preserving bijection from unlabeled rooted trees to pairs consisting of a single unlabeled vertex and  a multiset of unlabeled rooted trees.
The result then follows from Lemma \ref{l-h}, which gives the generating function for multisets of rooted trees in terms of $r(x)$.
\end{proof}
The easiest way to compute the coefficients of $r(x)$ from \eqref{e-R}, using a computer algebra system, is by successive substitution. 
Let
$r(x) = r_1 x+ r_2 x^2 + \cdots$
and let $r^{(n)}(x) = r_1 x+ r_2 x^2 +\cdots+r_n x^n$.
Then $r_{n+1}$ is the coefficient of $x^{n+1}$ in 
\[x\exp\biggl( \sum_{k=1}^n \frac{r(x^k)}{k} \biggr).\]
It is easy  to use this formula, inefficient though it is, with a computer algebra system to compute the numbers $r_n$. 

If we want our computer to do less work, we can derive a more efficient formula.
Suppose two generating functions $g(x)=\sum_{n=0}^\infty g_n x^n$ and $f(x)=\sum_{n=1}^\infty f_n x^n$ are related by $g(x)=e^{f(x)}$. 
Then $g'(x) = f'(x) e^{f(x)} = f'(x) g(x)$. Equating coefficients of $x^n$ gives the recurrence
\begin{equation}
\label{e-exprec}
(n+1)g_{n+1}=\sum_{k=0}^n (k+1) f_{k+1} g_{n-k}
\end{equation}
from which we can efficiently compute the coefficients of $g(x)$ in terms of those of~$f(x)$. 
Taking $g(x)=r(x)/x$ and $f(x) =\sum_{k=1}^\infty r(x^k)/k$, we get a recurrence for the numbers $r_n$ from \eqref{e-exprec} by setting 
$g_n=r_{n+1}$ and $f_n = \sum_{k\mid n}r_{n/k}/k$.

Either way, we find without difficulty that 
\begin{multline*}
r(x) =x+{x}^{2}+2{x}^{3}+4{x}^{4}+9{x}^{5}+20{x}^{6}+48{x}^{7}+115
{x}^{8}+286{x}^{9}\\
+719{x}^{10}+1842{x}^{11}+4766{x}^{12}+
12486{x}^{13}+32973{x}^{14}\\
+87811{x}^{15}+235381{x}^{16}+
634847{x}^{17}+1721159{x}^{18}\\
+4688676{x}^{19}+12826228{x}^{20
}+35221832{x}^{21}+\cdots\qquad\qquad
\end{multline*}
These numbers are sequence \seqnum{A000081} in the Online Encyclopedia of Integer Sequences (OEIS) \cite{oeis}.

\subsection{Unrooted trees.}
\label{s-unrooted}
To count unrooted trees, we find a formula that expresses the generating function for unrooted trees in terms of the generating function for rooted trees. We accomplish this through the \emph{dissymmetry theorem} of Pierre Leroux \cite{leroux, l-m}. (Otter \cite{otter} used a related, but somewhat different, result called the ``dissimilarity characteristic theorem.") 

Leroux's theorem relates unrooted trees to trees rooted at a vertex, trees rooted at an edge, and trees rooted at a vertex and incident edge.

\begin{thm}[The dissymmetry theorem] 
\label{t-diss}
Let $T$ be a \textup{(}labeled\textup{)} tree. Let $T\vertx$ be the set of rootings of $T$ at a vertex, let $T\edge$ be the set of rootings of $T$ at an edge, and let $T\ve$ be the set of rootings of $T$ at a vertex and incident edge. Then there is a bijection $\phi_T$ from 
$
T\vertx\cup T \edge
$
to
$ \{T\}\cup T\ve$
which is compatible with isomorphisms of trees; i.e., if $\alpha\colon T_1\to T_2$ is an isomorphism of trees, and we extend $\alpha$ in the natural way to rootings of $T_1$ and $T_2$, then $\alpha \circ \phi_{T_1}=\phi_{T_2}\circ\alpha$.

Thus $\phi_T$ gives a corresponding bijection for unlabeled trees.
\end{thm}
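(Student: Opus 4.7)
The natural anchor is the unique \emph{center} of a tree: the classical iterative-leaf-pruning argument shows that every tree $T$ has a canonically defined center, which is either a single vertex or a single edge. Because the center is determined by purely graph-theoretic properties, any isomorphism $\alpha \colon T_1 \to T_2$ must carry the center of $T_1$ to the center of $T_2$ (and the case---vertex vs.\ edge---is preserved). I would use this as the backbone of the construction, so that compatibility with isomorphisms follows automatically from the naturality of the center.

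In Case A, where the center is a vertex $c$, I would define $\phi_T(c) = T$ and, for each $v \in T\vertx$ with $v \neq c$, set $\phi_T(v) = (v, e_v)$, where $e_v$ is the first edge on the unique path from $v$ to $c$; for each $e \in T\edge$, set $\phi_T(e) = (u, e)$, where $u$ is the endpoint of $e$ closer to $c$. In Case B, where the center is an edge $e^* = \{c_1, c_2\}$, define $\phi_T(e^*) = T$ and, for $e \neq e^*$ in $T\edge$, set $\phi_T(e) = (u, e)$ with $u$ the endpoint of $e$ closer to $e^*$; for each $v \in T\vertx$, set $\phi_T(v) = (v, e_v)$, where $e_v$ is the first edge on the path from $v$ toward $e^*$, with the convention $e_v = e^*$ when $v \in \{c_1, c_2\}$.

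To verify bijectivity, I would describe the inverse directly. Given $(v, e) \in T\ve$, examine whether $v$ is the endpoint of $e$ closer to or farther from the center: in the former case, $(v, e)$ is the image of the edge rooting $e$; in the latter, it is the image of the vertex rooting at $v$ (with $e_v = e$). The count $|T\vertx| + |T\edge| = n + (n-1) = 2n - 1 = 1 + 2(n-1) = |\{T\}| + |T\ve|$ provides a sanity check. Compatibility with isomorphisms is then immediate, since the center, the notion of ``first edge toward the center,'' and the notion of ``endpoint closer to the center'' are all isomorphism-invariant, so $\alpha$ commutes with every step of the definition of $\phi_T$.

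The only mildly delicate point is the boundary behavior in Case B, where the two vertex rootings $c_1$ and $c_2$ must both map to pairs involving $e^*$---specifically to $(c_1, e^*)$ and $(c_2, e^*)$---to compensate for $e^* \mapsto T$ absorbing the single edge rooting at the center. Spelling out this boundary case carefully is the main organizational hurdle; once that is done, the remaining verifications reduce to a routine case check.
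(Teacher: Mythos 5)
Your proposal is correct and follows essentially the same route as the paper: both anchor the construction on the automorphism-invariant center of the tree, send the rooting at the center to the unrooted tree $T$, and send every other rooting to the pair consisting of itself and the next vertex or edge on the unique path toward the center, with naturality under isomorphisms following from the invariance of the center. Your explicit split into the vertex-center and edge-center cases, the counting sanity check, and the boundary convention $e_v = e^*$ for $v \in \{c_1,c_2\}$ are just more detailed renderings of the paper's uniform description.
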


\begin{proof}
We start with the fact that every tree has either an edge or a vertex, called the \emph{center}, which is fixed by every automorphism of the tree.%
\footnote{All that we really need is that every tree has either a vertex or an edge that is fixed by every automorphism of the tree; the center as we define it is a convenient way of constructing such an edge or vertex.
}
The center may obtained by successively removing every leaf (vertex of degree 1) and its incident edge until only a single vertex or edge remains.
 (The center may also be characterized as the central vertex or edge on every longest path in the tree.)
 
Now suppose that $T^w$ is a tree $T$  rooted at a vertex or edge $w$. If $w$ is the center of $T$ then we define $\phi_T(T^w)$ to be $T$. Otherwise, there is a unique path from $w$ to the center. Then $\phi_T(T^w)$ is $T$ rooted at both $w$ and the next vertex or edge on this path (which might be the center).

It is easy to describe the inverse of $\phi_T$: $\phi_T^{-1}(T)$ is $T$ rooted at its center, and $\phi_T^{-1}$ of $T$ rooted at a vertex $v$ and incident edge $e$ is $T$ rooted at whichever of $v$ and $e$ is farther from the root. 

Since the center of a tree is fixed by every automorphism of the tree, it follows that any isomorphism is fixed of trees takes centers to centers, thus paths to the center to paths to the center.  Thus for any isomorphism $\alpha\colon T_1\to T_2$, we have $\alpha \circ \phi_{T_1}=\phi_{T_2}\circ\alpha$.
\end{proof}

For example, in the tree $T$ shown in Figure \ref{f-tree}, vertex 5 is the center. So $\phi_T$ takes $T$ rooted at 5 to $T$ unrooted, it takes $T$ rooted at 3 to $T$ rooted at $3$ and $\{3,6\}$, and it takes $T$ rooted at $\{5,6\}$ to $T$ rooted at $5$ and $\{5,6\}$.

\begin {figure}[hbt]
\begin{tikzpicture}
	\vertex[fill] (2) at (0,0) [label=left:$2$] {};
	\vertex[fill] (1) at (0,1) [label=left:$1$] {};
	\vertex[fill] (3) at (0,2) [label= left:$3$] {};
	\vertex[fill] (6) at (1,1) [label=above:$6$] {};
	\vertex[fill] (5) at (2,1) [label=above:$5$] {};
	\vertex[fill] (7) at (3,0) [label=right:$7$] {};
	\vertex[fill] (4) at (3,2) [label= above:$4$] {};
	\vertex[fill] (8) at (4,2) [label= above:$8$] {};
	\path
		(3) edge (6)
		(1) edge (6)
		(2) edge (6)
		(6) edge (5)
		(5) edge (4)
		(5) edge (7)
		(4) edge (8)
	;
\end{tikzpicture}
   \caption{A tree with center $5$}
   \label{f-tree}
\end{figure}
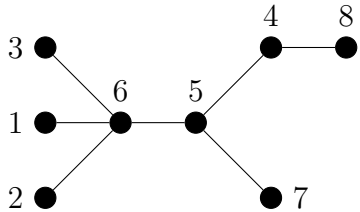

 Our main result on the enumeration of unrooted trees is a consequence of the dissymmetry theorem.
\begin{thm}
\label{t-unlabel}
Let $\T$ be a class of trees closed under isomorphism. Let $t(x)$ be the generating function for unlabeled $\T$-trees. Let $\Tvertx(x)$, $\Tedge(x)$, and $\Tve(x)$ be the generating functions for unlabeled $\T$-trees rooted at a vertex, at an edge, and at a vertex and incident edge. Then
\begin{equation*}
t(x) = \Tvertx(x) + \Tedge(x) -\Tve(x).
\end{equation*}
\end{thm}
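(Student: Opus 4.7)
The plan is to derive the identity directly from the dissymmetry theorem by taking its ``generating-function shadow.'' The first step is to unpack the equivariance clause $\alpha \circ \phi_{T_1} = \phi_{T_2} \circ \alpha$ of Theorem \ref{t-diss}: specializing to $T_1 = T_2 = T$ and $\alpha \in \mathrm{Aut}(T)$ shows that $\phi_T$ commutes with every automorphism of $T$, so it descends to a bijection from $\mathrm{Aut}(T)$-orbits on $T\vertx \cup T\edge$ to $\mathrm{Aut}(T)$-orbits on $\{T\} \cup T\ve$.

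Next I would identify these orbit sets with unlabeled objects. In the same spirit as the ``intuitively obvious'' fact pointed out in Section \ref{s-rooted}, the isomorphism classes of vertex-rooted trees whose underlying unlabeled tree is the class of $T$ are in bijection with the $\mathrm{Aut}(T)$-orbits on $T\vertx$, and analogously for $T\edge$ and $T\ve$; the singleton $\{T\}$ represents the unlabeled tree itself and accounts for exactly one orbit. Summing the orbit-level bijection over one representative $T$ from each isomorphism class in $\T$, with each orbit weighted by $x^{|V(T)|}$, the left-hand side contributes $\Tvertx(x) + \Tedge(x)$ while the right-hand side contributes $t(x) + \Tve(x)$. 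Rearranging gives the claimed identity.

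The main obstacle is really just careful bookkeeping in the passage from labeled trees to unlabeled ones: one has to verify that isomorphism classes of rooted trees correspond precisely to automorphism orbits on rootings of a fixed labeled representative, and that the descended map is independent of the choice of representative. Both facts reduce to the equivariance statement in Theorem \ref{t-diss}, so once the orbit-counting lemma is in place no further combinatorial input is needed --- the theorem is essentially just the dissymmetry theorem with a term moved across the equals sign.
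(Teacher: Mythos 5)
Your proposal is correct and follows the same route as the paper: the paper's proof is simply the one-line observation that Theorem \ref{t-diss} yields $t(x) + \Tve(x) = \Tvertx(x) + \Tedge(x)$, leaving the labeled-to-unlabeled bookkeeping implicit in the final sentence of that theorem's statement. Your write-up just makes that bookkeeping (descent to $\mathrm{Aut}(T)$-orbits and the identification of orbits with unlabeled rooted trees) explicit, which is a faithful elaboration rather than a different argument.
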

\begin{proof}
Theorem \ref{t-diss} implies that
\begin{equation*}
t(x) + \Tve(x) = \Tvertx(x) + \Tedge(x).\qedhere
\end{equation*}
\end{proof}

Applying Corollary \ref{t-unlabel} to the case in which $\T$ is the class of all trees allows us to express the generating function for unrooted  trees in terms of the generating function $r(x)$ for rooted trees. 
\begin{thm}
\label{t-unr}
The generating function $u(x)$ for unrooted trees is given by 
\begin{equation*}
u(x) = r(x) -\frac 12 \left(r(x)^2 -r(x^2)\right),
\end{equation*}
where $r(x)$ is the generating for rooted trees, which satisfies
\begin{equation*}
r(x) =x\exp\biggl( \sum_{k=1}^\infty \frac{r(x^k)}{k} \biggr).
\end{equation*}
\end{thm}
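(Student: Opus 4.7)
The plan is to invoke Theorem~\ref{t-unlabel} with $\T$ the class of all trees, so that $t(x)=u(x)$ and $\Tvertx(x)=r(x)$ by definition of $r(x)$. All that remains is to express $\Tedge(x)$ and $\Tve(x)$ in terms of $r(x)$, after which the claimed identity will follow by simple algebra.

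First I would handle $\Tedge(x)$. The key structural observation is that removing any edge $e=\{u,v\}$ from a tree $T$ leaves exactly two connected components (because $T$ has no cycles, removing an edge disconnects it into precisely two pieces, each of which is itself a tree). Each component is naturally rooted, at $u$ and $v$ respectively. Since the edge $e$ itself carries no orientation, the resulting pair of rooted trees is unordered. This sets up a weight-preserving bijection between edge-rooted trees and $2$-element multisets of rooted trees, where size is the total number of vertices. Applying Lemma~\ref{l-h2} to $\A$ equal to the set of rooted trees then gives
\begin{equation*}
\Tedge(x) = h_2[r(x)] = \tfrac{1}{2}\bigl(r(x)^2 + r(x^2)\bigr).
\end{equation*}

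Next I would handle $\Tve(x)$. Given a tree rooted at a vertex $v$ and an incident edge $e$, again remove $e$ to get two rooted subtrees; but now the distinguished vertex $v$ singles out one of the two components as ``first'' and the other as ``second.'' This yields a weight-preserving bijection between $\T\ve$-objects and ordered pairs of rooted trees, so
\begin{equation*}
\Tve(x) = r(x)^2.
\end{equation*}
The small delicate point, which I would state explicitly, is that the same argument for connectivity used in the edge case applies here.

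Finally, substituting into Theorem~\ref{t-unlabel} gives
\begin{equation*}
u(x) = r(x) + \tfrac{1}{2}\bigl(r(x)^2+r(x^2)\bigr) - r(x)^2 = r(x) - \tfrac{1}{2}\bigl(r(x)^2 - r(x^2)\bigr),
\end{equation*}
and the formula for $r(x)$ has already been established in~\eqref{e-R}. The main conceptual step is the ``remove the distinguished edge to split the tree'' bijection; once that is in hand, the rest is bookkeeping via Lemmas~\ref{l-h2} and~\ref{l-h}.
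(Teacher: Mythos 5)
Your proposal is correct and follows essentially the same route as the paper: apply Theorem~\ref{t-unlabel} with $\T$ the class of all trees, identify edge-rooted trees with unordered pairs of rooted trees (giving $h_2[r(x)]$ via Lemma~\ref{l-h2}) and vertex-edge-rooted trees with ordered pairs (giving $r(x)^2$), then simplify. The only thing worth adding is an explicit remark that these bijections are compatible with isomorphism, so that they descend to the unlabeled setting where the generating functions live.
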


\begin{proof}
In Theorem \ref{t-unlabel} we take $\T$ to be the class of all trees. We need to compute $\Tvertx(x)$, $\Tedge(x)$ and $\Tve(x)$.
We know that $\Tvertx(x)=r(x)$. If we remove the root edge  $\{v,w\}$ from a tree rooted at the edge $\{v,w\}$, we are left with a pair of trees. If we root these at $v$ and $w$, we obtain an unordered pair of rooted trees, and conversely, the original edge-rooted tree can be recovered from the unordered pair of rooted trees.  
This bijection is compatible with isomorphism so it applies to unlabeled trees, and thus we have
\[\Tedge(x) = h_2[r(x)] = \frac 12 \left(r(x)^2 + r(x^2)\right)\]
by Lemma \ref{l-h2}.
 
Similarly, if a tree is rooted at an edge $\{v,w\}$ and at the vertex $v$, then removing $\{v,w\}$ and rooting the two remaining trees at $v$ and $w$ gives an \emph{ordered} pair  $(T_1,T_2)$ of rooted trees, where $T_1$ is rooted at $v$ and $T_2$ is rooted at $w$, and the original vertex-edge-rooted tree can be recovered from this ordered pair.  Thus
\begin{equation*}
\Tve(x) = r(x)^2.
\end{equation*}

Then applying Theorem \ref{t-unlabel} gives
\begin{align*}
u(x) 
     &= r(x) + \frac 12 \left(r(x)^2 + r(x^2)\right)-r(x)^2 \\
     &=r(x) -\frac 12 \left(r(x)^2 -r(x^2)\right).\qedhere
\end{align*}
\end{proof}
By Theorem \ref{t-unr}  we can easily compute
\begin{multline*}
u(x) = x+{x}^{2}+{x}^{3}+2{x}^{4}+3{x}^{5}+6{x}^{6}+11{x}^{7}+23{x}
^{8}+47{x}^{9}\\+106{x}^{10}+235{x}^{11}+551{x}^{12}+1301{x}^{
13}+3159{x}^{14}\\+7741{x}^{15}+19320{x}^{16}+48629{x}^{17}+
123867{x}^{18}\\+317955{x}^{19}+823065{x}^{20}+2144505{x}^{21}+\cdots\qquad\qquad
\end{multline*}
These numbers are sequence \seqnum{A000055} in the OEIS.

\section{Homeomorphically irreducible trees.}
We can count \hi\ trees by modifying the approach of Section \ref{s-unrooted}. To apply Theorem \ref{t-unlabel}, we need to count \hi\ trees rooted at a vertex, at an edge, and at a vertex and incident edge.

In a vertex-rooted tree, a \emph{child} of a vertex $v$ is a vertex  adjacent to $v$ that is farther from the root than $v$. Thus if $v$ is the root, every vertex adjacent to $v$ is a child of $v$, so the number of  children of $v$ is the degree of $v$, but if $v$ is not the root then the number of children of $v$ is one less than the degree of $v$. 

The number of children of a vertex is important since in the decomposition for rooted trees described in Section \ref{s-rooted}, the number of trees into which a rooted tree is decomposed is the number of children of the root. 

To apply Theorem \ref{t-unlabel} we need to count three types of rooted \hi\ trees. These can all be counted in terms of an auxiliary class of rooted trees, which we call \emph{$S$-trees}. These are rooted trees in which no vertex has exactly one child. In other words, these are rooted trees in which the root does not have degree 1 and no non-root vertex has degree 2. 

Let $s(x)$ be the generating function for $S$-trees. Then the decomposition of Section \ref{s-rooted} gives%
\footnote{It is interesting to note that $s(x)$ can be expressed in terms of the series $r(x)$ of Section \ref{s-unrooted} that counts unrestricted rooted trees: Rewriting \eqref{e-S} as 
$s(x) =\bigl(x/(1+x)\bigr) h[s(x)],$
and comparing with \eqref{e-R}, we see that $s(x) = r\bigl(x/(1+x)\bigr)$.}
\begin{equation}
\label{e-S}
 s(x) = x\sum_{n\ne 1} h_n[s(x)] = x \bigl(h[s(x)] - s(x)\bigr).
\end{equation}

We can now express the generating function for \hi\ trees in terms of $s(x)$.

\begin{thm}
The generating function for  \hi\ trees is 
\begin{equation*}
(1+x)s(x)  + \frac12(1-x)s(x^2) -\frac12(1+x)s(x)^2,\end{equation*}
where $s(x)$ satisfies
\begin{equation*}
s(x) =x\biggl(\exp\biggl( \sum_{k=1}^\infty \frac{s(x^k)}{k} \biggr)\ -s(x)\biggr).
\end{equation*}
\end{thm}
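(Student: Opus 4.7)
The plan is to apply Theorem \ref{t-unlabel} with $\T$ the class of homeomorphically irreducible (HI) trees, expressing $\Tvertx(x)$, $\Tedge(x)$, and $\Tve(x)$ in terms of the $S$-tree generating function $s(x)$. The key observation is that when an HI tree has a distinguished vertex $v$ and one of the edges incident to $v$ is removed, the component not containing $v$, rooted at the other endpoint of that edge, is exactly an $S$-tree: that endpoint formerly had degree $1$ or at least $3$ in the HI tree, so as a root of its subtree it now has $0$ or at least $2$ children, and every other vertex retains degree $1$ or at least $3$. Conversely, any $S$-tree can be so attached to produce a legitimate piece of an HI tree.

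First I would compute $\Tedge(x)$: deleting the distinguished edge of an edge-rooted HI tree and rooting each piece at the endpoint it contains gives an unordered pair of $S$-trees, so by Lemma \ref{l-h2}, $\Tedge(x) = h_2[s(x)] = \tfrac12\bigl(s(x)^2 + s(x^2)\bigr)$. Next, $\Tve(x) = s(x)^2$, because the same decomposition becomes ordered once the distinguished vertex singles out one of the two $S$-trees. For $\Tvertx(x)$, I apply the root-plus-multiset decomposition of a vertex-rooted tree; each subtree hanging off the root is an $S$-tree, and the size of this multiset equals the degree of the root in the HI tree, which may be $0$, $1$, or at least $3$, but never $2$. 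Excluding $2$-element multisets from $h[s(x)]$ therefore yields $\Tvertx(x) = x\bigl(h[s(x)] - h_2[s(x)]\bigr)$. Rewriting the defining relation \eqref{e-S} as $x\,h[s(x)] = (1+x)s(x)$ transforms this into $\Tvertx(x) = (1+x)s(x) - \tfrac{x}{2}\bigl(s(x)^2 + s(x^2)\bigr)$.

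Combining via Theorem \ref{t-unlabel}, $t(x) = (1+x)s(x) + \tfrac{1-x}{2}\bigl(s(x)^2 + s(x^2)\bigr) - s(x)^2$, which on collecting the $s(x^2)$ and $s(x)^2$ terms becomes the asserted formula $(1+x)s(x) + \tfrac12(1-x)s(x^2) - \tfrac12(1+x)s(x)^2$. The functional equation for $s(x)$ is immediate from \eqref{e-S} and Lemma \ref{l-h}. The main conceptual hurdle is the bookkeeping for which multiset sizes are admissible at a root: size $2$ must be forbidden, since it would create a degree-$2$ root, and this single exclusion, captured by the extra $-x\,h_2[s(x)]$ term in $\Tvertx(x)$, is what cleanly separates the HI enumeration from the unrestricted tree enumeration of Theorem \ref{t-unr}.
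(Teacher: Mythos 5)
Your proposal is correct and follows essentially the same route as the paper: apply Theorem \ref{t-unlabel} with $\T$ the class of \hi\ trees, identify the pendant subtrees as $S$-trees, compute $\Tvertx(x)=x\bigl(h[s(x)]-h_2[s(x)]\bigr)$, $\Tedge(x)=h_2[s(x)]$, $\Tve(x)=s(x)^2$, and simplify using \eqref{e-S} and Lemmas \ref{l-h2} and \ref{l-h}. Your justification that the root of each hanging subtree has $0$ or at least $2$ children is a welcome elaboration of a point the paper leaves largely implicit.
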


\begin{proof}
We apply Theorem \ref{t-unlabel}, with $\T$ the class of \hi\ trees. We will express the generating functions $\Tvertx(x)$, $\Tedge(x)$, and $\Tve(x)$ for $\T$ in terms of $s(x)$.

A \hi\ tree rooted at a vertex decomposes into a root together with a multiset of some number other than 2 of $S$-trees. Thus
the generating function for \hi\ trees rooted at a vertex is 
\begin{align}
\Tvertx(x) &= x\sum_{n\ne 2} h_n[s(x)]=x\bigl(h[s(x)] -h_2[s(x)]\bigr)\notag\\
  &= (1+x)s(x) -xh_2[s(x)],\text{ by \eqref{e-S}.}\label{e-hiv}
\end{align}

Conveniently, joining two rooted $S$-trees with an edge between their roots converts the not-1 degree of the roots to not-2, so as in Section \ref{s-unrooted}, the generating function for \hi\ trees rooted at an edge is
\begin{equation}
\label{e-hie}
\Tedge(x)=h_2[s(x)]
\end{equation}
and the generating function\ for \hi\ trees rooted a vertex and incident edge is 
\begin{equation}
\label{e-hiev}
\Tve(x) = s(x)^2.
\end{equation}
By Theorem \ref{t-unlabel}, the generating function for \hi\ trees is  $\Tvertx(x) + \Tedge(x) -\Tve(x)$, where $\Tvertx(x)$, $\Tedge(x)$, and $\Tve(x)$ are given by 
\eqref{e-hiv}, \eqref{e-hie}, and \eqref{e-hiev}, and $s(x)$ satisfies \eqref{e-S}. The formulas given in the theorem then follow by applying Lemmas \ref{l-h2} and \ref{l-h} and simplifying.
\end{proof}

We can then compute as many terms as we want:
\begin{multline*}
T(x) =x+{x}^{2}+{x}^{4}+{x}^{5}+2{x}^{6}+2{x}^{7}+4{x}^{8}+5{x}^{9}+
10{x}^{10}
\\
+14{x}^{11}+26{x}^{12}+42{x}^{13}+78{x}^{14}+132
{x}^{15}
+249{x}^{16}\\
+445{x}^{17}
+842{x}^{18}+1561{x}^{19}+2988
{x}^{20}+5671{x}^{21}+\cdots
\end{multline*}
These coefficients are sequence \seqnum{A000014} in the OEIS.
Pictures of all \hi\ trees with at most twelve vertices can be found in Appendix II of Harary and Prins \cite{hp}.
Will's original problem corresponds to the coefficient of $x^{10}$, which is 10. 
A much more challenging problem for Will would have been to find the number of \hi\ trees with 100 vertices,  which is 76119905667088547333499833156.

\textbf{Acknowledgment.} I would like to thank Michael Levine for pointing out an error in an earlier version of this paper.

\bibliography{hunting}{}

\providecommand{\bysame}{\leavevmode\hbox to3em{\hrulefill}\thinspace}
\begin{thebibliography}{10}

\bibitem{cayley1}
A.~Cayley, \emph{On the analytical forms called trees, with applications to the
  theory of chemical combinations}, Reports of the British Association for the
  Advancement of Science \textbf{45} (1875), 257--305.

\bibitem{cayley2}
\bysame, \emph{On the analytical forms called trees}, American Journal of
  Mathematics \textbf{4} (1881), 266--288.

\bibitem{cl}
Ivan Constantineau and Jacques Labelle, \emph{Calcul combinatoire du nombre
  d'endofonctions et d'arborescences laiss\'{e}es fixes par une permutation},
  Ann. Sci. Math. Qu\'{e}bec \textbf{13} (1990), no.~2, 33--38.

\bibitem{gg}
Andrew Gainer-Dewar and Ira~M. Gessel, \emph{Counting unlabeled {$k$}-trees},
  J. Combin. Theory Ser. A \textbf{126} (2014), 177--193.

\bibitem{graph-enum}
Frank Harary and Edgar~M. Palmer, \emph{Graphical enumeration}, Academic Press,
  New York-London, 1973.

\bibitem{hp}
Frank Harary and Geert Prins, \emph{The number of homeomorphically irreducible
  trees, and other species}, Acta Math. \textbf{101} (1959), 141--162.

\bibitem{hks}
G\'abor Horv\'ath, J\'ozsef Kor\'andi, and Csaba Szab\'o, \emph{Mathematics in
  {G}ood {W}ill {H}unting {II}: Problems from the students perspective},
  Teaching Mathematics and Computer Science \textbf{11} (2013), no.~1, 3--19.

\bibitem{oeis}
OEIS~Foundation Inc., \emph{The {O}n-{L}ine {E}ncyclopedia of {I}nteger
  {S}equences}, 2021, \url{http://oeis.org}.

\bibitem{labelle}
Gilbert Labelle, \emph{Some new computational methods in the theory of
  species}, Combinatoire \'{e}num\'{e}rative ({M}ontreal, {Q}ue.,
  1985/{Q}uebec, {Q}ue., 1985), Lecture Notes in Math., vol. 1234, Springer,
  Berlin, 1986, pp.~192--209.

\bibitem{leroux}
P.~Leroux, \emph{Methoden der {A}nzahlbestimmung f{\"u}r einige {K}lassen von
  {G}raphen}, Mathematische Schriften, vol.~26, Universit{\"a}tsbibliothek
  Bayreuth, 1988.

\bibitem{l-m}
P.~Leroux and B.~Miloudi, \emph{G{\'e}n{\'e}ralisations de la formule
  d'{O}tter}, Ann. Sci. Math. Qu{\'e}bec \textbf{16} (1992), no.~1, 53--80.

\bibitem{niven}
Ivan Niven, \emph{Formal power series}, Amer. Math. Monthly \textbf{76} (1969),
  871--889.

\bibitem{otter}
R.~Otter, \emph{The number of trees}, Annals of Mathematics \textbf{49} (1948),
  no.~2, 583--599.

\bibitem{pak}
Igor Pak, \emph{Complexity problems in enumerative combinatorics}, Proceedings
  of the International Congress of Mathematicians 2018: Rio De Janeiro
  (Singapore) (Boyan Sirakov, Paulo~Ney de~Souza, and Marcelo Viana, eds.),
  World Scientific, 2019, pp. 3153--3180. An expanded version is available at
  \href{https://arxiv.org/abs/1803.06636}{\texttt{arXiv:1803.06636 [math.CO]}}.

\bibitem{EC2}
Richard~P. Stanley, \emph{Enumerative {C}ombinatorics. {V}ol. 2}, Cambridge
  Studies in Advanced Mathematics, vol.~62, Cambridge University Press,
  Cambridge, 1999.

\bibitem{wagner}
Stephan~G. Wagner, \emph{On an identity for the cycle indices of rooted tree
  automorphism groups}, Electron. J. Combin. \textbf{13} (2006), no.~1, Note
  14, 7 pp.

\bibitem{wilf}
Herbert~S. Wilf, \emph{What is an answer?}, Amer. Math. Monthly \textbf{89}
  (1982), no.~5, 289--292.

\end{thebibliography}
\bibliographystyle{amsplain}

\end{document}